\begin{document}

	\title[On the subdifferential of a function of the tensor spectrum]
	{On the subdifferential of symmetric convex functions of the spectrum for symmetric and orthogonally decomposable tensors}
	\author{St\'ephane Chr\'etien {\tiny and} Tianwen Wei}

	\address{St\'ephane Chr\'etien is with the National Physical Laboratory,
		Mathematics and Modelling, \\
		Hampton Road,
		Teddington, TW11 OLW, UK}
	\email{stephane.chretien@npl.co.uk}
	
	\address{Tianwen Wei is with the Department of Mathematics and Statistics \\
		Zhongnan University of Economics and Law \\
		Nanhu Avenue 182
		430073, Wuhan, China}
	\email{tianwen.wei.2014@ieee.org}
	
	\begin{abstract}
		The subdifferential of convex functions of the singular spectrum of real matrices has been widely studied in matrix analysis, optimization and automatic control theory.
		Convex optimization over spaces of tensors is now gaining much interest due to its potential applications in signal processing, statistics and engineering.
		The goal of this paper is to present an extension of the approach by Lewis \cite{lewis1995convex} for the analysis of the subdifferential of certain convex functions of the
		spectrum of symmetric tensors. We give a complete characterization of
		the subdifferential of Schatten-type tensor norms for symmetric tensors.
		Some partial results in this direction are also given for Orthogonally Decomposable tensors.
	\end{abstract}

	\maketitle
	

	
	\tableofcontents

	\section{Introduction}
	
	\subsection{Background}
    Multidimensional arrays, also known as tensors are higher-order generalizations of vectors and matrices.
	In recent years, they have been the subject of extensive interest in various extremely active fields such as e.g. statistics, signal processing, automatic control, etc \ldots where a lot of problems involve quantities that are intrinsically multidimensional such as higher order moment tensors \cite{anandkumar2014tensor}. Many natural and useful quantities in linear algebra such as the rank or the Singular Value Decomposition turn out to be very difficult to compute or generalize in the tensor setting \cite{Kolda2009,landsberg2012tensors, hackbusch2012tensor}.
	Fortunately, efficient approaches exist in the case of
	symmetric tensors which lie at the heart of the moment approach which recently proved very efficient for addressing essential problems in Statistics/Machine Learning such as Clustering, estimation in Hidden Markov Chains, etc \ldots See the very influencial paper \cite{anandkumar2014tensor} for more details. In many statistical models such as the ones presented in \cite{anandkumar2014tensor}, the rank of the involved is
	low and one expects that the theory of sparse recovery can be applied to recover them form just a few observations just as in the case of Matrix Completion \cite{candes2009exact}, \cite{Candes2010tao} Robust PCA \cite{Candes2011robustpca} and Matrix Compressed Sensing \cite{recht2010guaranteed}.
	In such approaches to Machine Learning, one usually have to solve a penalized least squares problem of the type
	\begin{align*}
	    & \min_{X \in \mathbb R^{n_1 \times n_2}}
	    \quad \Vert y - \mathcal A(X) \Vert + \lambda \ p(X),
	\end{align*}
	where the penalization $p$ is rank-sparsity promoting such as the nuclear norm and $\mathcal A$ is a linear operator taking values in $\mathbb R^n$. In the tensor setting, we look for solutions of problems of the type
	\begin{align*}
	    & \min_{X \in \mathbb R^{n_1 \times \cdots \times n_D}}
	    \quad \Vert y - \mathcal A(X) \Vert + \lambda \ p(X),
	\end{align*}
	for $D>2$ and $p$ is a generalization of the nuclear norm or some Schatten-type norm for tensors. The extention of Schatten norms to the tensor setting has to be carefully defined. In particular, several nuclear norms can be naturally defined \cite{yuan2014tensor}, \cite{gandy2011tensor}, \cite{mu2014square}. Moreover, the study of the efficiency of sparsity promoting penalization relies crucially on the knowledge of the subdifferential of the norm involved as achieved in \cite{amelunxen2014living} or \cite{lecue2016regularization}, or at least a good approximation of this subdifferential \cite{yuan2014tensor}
	\cite{mu2014square}.
    In the matrix setting, the works of
    \cite{watson1992characterization, lewis1995convex} are famous for providing a neat characterization of the subdifferential of matrix norms or more generaly functions of the matrix enjoying enough symmetries.
    In the 3D or higher dimensional setting, however, the
	case is much less understood. The relationship between
	the tensor norms and the norms of the flattenings are intricate
	although some good bounds relating one to the other can be obtained
	as in \cite{hu2015relations}. Notice that many recent works use the nuclear norms
	of the flattenings of the tensors to be optimized, especially in the field of compressed
	sensing; see e.g. \cite{mu2014square, gandy2011tensor}.
	One noticeable exception is the recent work \cite{yuan2014tensor} where a study of the subdifferential of a purely tensorial nuclear norm is proposed. However, in \cite{yuan2014tensor}, only a subset of the subdifferential is given but the subdifferential itself could not be fully characterized.


	Our goal in the present paper is to extend previous results on matrix norms to the tensor setting.
    The focus will be on two special type of tensors, namely symmetric tensors and orthogonally decomposable tensors (abbreviated as {\em odeco} tensor hereafter). Symmetric tensors are invariant under any permutation of its indices \cite{comon2008symmetric}. They
    play a important role in many applications, e.g. Gaussian Mixture Models (GMM), Independent Component
	Analysis (ICA) and Hidden Markov Models (HMM), see \cite{anandkumar2014tensor} for a survey.
    {\em odeco} tensors have a diagonal core in their Higher Order Singular Value Decomposition (HOSVD) \cite{lathauwer2000HOSVD}. They are special structured tensors that inherit many nice properties of their matrix counterpart. In this contribution, we propose a general study the subdifferential of certain convex functions of the spectrum of  these  tensors and apply our results to the
	computation of the subdifferential of useful and natural tensor norms.
	The convex conjugate approach used in this paper stems from the elegant work of \cite{lewis1995convex}.
	One key ingredient for the understanding of
	tensor norms is the tensor version Von Neumann's trace inequality and the precise description of the equality case. 
We suspect that
	the lack of results on the subdifferential of tensor norms in the literature is due to the fact that an extension of the Von Neumann inequality for tensors did not exist until recently; see \cite{chretien2015neumann}. %

	The plan of the paper is as follows. In Section \ref{gentens}, we provide a general overview of tensors and their spectral factorizations. In Section \ref{subdf}, we provide a general formula for the subdifferential for symmetric tensors and {\em odeco} tensors.  Finally, in Section \ref{applischatten}, we
	provide formulas for the subdifferential of Schatten norms for symmetric tensors and the subset of {\em odeco} tensors in the subdifferential of Schatten norms for {\em odeco} tensors.

	\subsection{Notations}
	\subsubsection{Convex functions}
	For any convex function $f$ : $\mathbb R^{n}\mapsto \mathbb R\cup \{+\infty\}$, the conjugate function $f^*$ associated to $f$ is defined by
	\bean
	f^*(g) \defeq \sup_{x \in \mathbb R^n}\quad \langle g, x  \rangle - f(x).
	\eean
	The subdifferential of $f$ at $x \in \mathbb R^n$ is defined by
	\bean
	\partial f & \defeq & \left\{ g \in \mathbb R^n \mid \forall y, \in \mathbb R^n \hspace{.3cm}
	f(y)\ge f(x)+\langle g,y-x\rangle  \right\}.
	\eean
	It is well known (see e.g. \cite{hiriart2013convex}) that
	$g\in\partial f(x)$ if and only if
	\bean
	f(x) + f^*(g) & = &  \langle g, x\rangle.
	\eean
	
	\subsubsection{Tensors}
	Let $D$ and $n_1,\ldots,n_D$ be positive integers.
	In the present paper,  a multi-dimensional array $\Xc$ in $\mathbb R^{n_1\times \cdots\times n_D}$ is called a $D$-mode tensor.  If $n_1=\cdots=n_D$, then we will say that tensor $\Xc$ is cubic.
	The set of $D$-mode cubic tensors
	will be denoted by $\Rb^{n\times \cdots \times n}$ or $\Rb^{nD}$ with a slight abuse of notation.

	The mode-$d$ fibers of a tensor $\Xc$ are the vectors obtained by varying the index $i_d$ while keeping the other indices fixed.

	It is often convenient to rearrange the elements of a tensor so that they form a matrix. This operation is referred to as matricization and can be defined in different ways. In this work, $\Xc_{(d)}$ stands for a
	matrix in $\Rb^{n_d\times \prod_{i=1;i\neq d}^{D}n_i}$ obtained by stacking the mode-$d$ fibers of $\Xc$ one after another with forward cyclic ordering \cite{lathauwer2010}.  Inversely, we
	define the tensorization operator $\Ts^{(d)}$ as the adjoint of the mode-$d$ matricization operator, i.e. it is such that
	\bean
	\langle \Xc_{(d)},M \rangle & = & \langle \Xc,\Ts^{(d)}(M) \rangle
	\eean
	for all $\Xc\in \mathbb R^{n_1\times \cdots \times n_D}$ and all $M\in \Rb^{n_d\times \prod_{i=1;i\neq d}^{D}n_i}$, where $\langle \cdot, \cdot\rangle$ denotes the scalar product defined in Section \ref{tensornorm}.
	
	The mode-$d$ multiplication of a tensor $\Xc \in \mathbb R^{n_1 \times \cdots \times n_D}$ by a matrix $M \in \mathbb R^{n'_d \times n_d}$, denoted by
	$\Xc \times_d M$, yields a tensor in $\mathbb R^{n_1\times \cdots \times n_{d-1} \times n^\prime_d \times n_{d+1} \times \cdots n_d}$.
	It is defined by
	\bean
	\left(\Xc \times_d M \right)_{i_1,\ldots,i_D} & = & \sum_{i_d} \quad \Xc_{i_1,\ldots,i_{d-1},i_d,i_{d+1},\ldots,i_D}
	M_{i_d,i^\prime_d}.
	\eean

	Last, we denote by $\otimes$ the tensor product, i.e. for any $v^{(1)}, \ldots,v^{(D)}$ with $v^{(d)} \in \mathbb \Rb^{n_d}$, $v^{(1)} \otimes \cdots \otimes v^{(D)}$ is a tensor in
	$\Rb^{n_1\times\cdots\times n_D}$ whose entries are given by
	\bean
	\left(v^{(1)} \otimes \cdots \otimes v^{(D)} \right)_{i_1,\ldots,i_D} & = & v^{(1)}_{i_1} \cdots v^{(D)}_{i_D}.
	\eean

	\section{Basics on tensors}
	\label{gentens}
	
	\subsection{General tensors}
	\subsubsection{Tensor rank}
	If a tensor $\Xc$ can be written as
	\bean
	\Xc=v^{(1)} \otimes \cdots \otimes v^{(D)},
	\eean
	then we say $\Xc$ is a rank one tensor.
	Any tensor $\Xc$ can easily be written as a sum of rank one tensors. Indeed, if $(e_i^{(n)})_{i=1,\ldots,n}$ denotes
	the canonical basis of $\mathbb R^n$, we have
	\bean
	\Xc & = & \sum_{i_1=1,\ldots,n_1,\ldots,i_D=1,\ldots,n_D} \quad x_{i_1,\ldots,i_D} \cdot
	e_{i_1}^{(n_1)} \otimes \ldots \otimes e_{i_D}^{(n_D)},
	\eean
	Among all possible decomposition as a sum of rank one tensors, one may look for the one involving
	the least possible number of summands, i.e.
	\bea
	\label{soro}
	\Xc & = & \sum_{j=1,\ldots,r} \quad  v_{j}^{(1)} \otimes \ldots \otimes v_{j}^{(D)},
	\eea
	for some vectors $v_{j}^{(d)}$, $j=1,\ldots,r$ and $d=1,\ldots,D$. The number $r$ is called the rank of $\Xc$.
	It is already known that the rank of a tensor is NP-hard to compute \cite{landsberg2012tensors}.	
	
	\subsubsection{The Higher Order SVD}
	
	One of the main problems with the ``sum-of-rank-one'' decomposition (\ref{soro}) is that the vectors $v_j^{(d)}$,
	$j=1,\ldots,r$ may not form an orthonormal family of vectors.
	The Tucker decomposition of a tensor is another decomposition which  reveal a possibly smaller tensor $\Sc$ hidden inside $\Xc$ under orthogonal transformations. More precisely,
	we have
	\bea
	\label{Tucker}
	\Xc & = & \Sc(\Xc) \times_1 U^{(1)} \times_2 U^{(2)} \cdots \times_D U^{(D)},
	\eea
	where each $U^{(d)}$ is orthogonal and $\Sc(\Xc)$ is a tensor of the same size as $\Xc$ defined as follows.
	Take the (usual) SVD of the matrix $\Xc_{(d)}$
	\bean
	\Xc_{(d)} & = & U^{(d)} \Sigma^{(d)} {V^{(d)}}^t
	\eean
	and based on \cite{lathauwer2000HOSVD}, we can set
	\bean
	\Sc(\Xc)_{(d)} & = & \Sigma^{(d)} {V^{(d)}}^t\left(U^{(d+1)} \otimes \cdots \otimes U^{(D)} \otimes U^{(1)}
	\otimes \cdots \otimes U^{(d-1)}\right).
	\eean
	Then, $\Sc(\Xc)_{(d)}$ is the mode-$d$ matricization of $\Sc(\Xc)$. One proceeds similarly for all $d=1,\ldots,D$
	and one recovers the orthogonal matrices $U^{(1)},\ldots,U^{(D)}$ which allow us to decompose $\Xc$ as
	in (\ref{Tucker}).
Notice that this construction implies that $\Sc(\Xc)$ has orthonormal fibers for every modes.

	\subsubsection{Norms of tensors\label{tensornorm}}
	Several tensor norms can be defined on the tensor space $\Rb^{n_1\times \cdots \times n_D}$. 
    The first one is a natural extension of the
	Frobenius norm or Hilbert-Schmidt norm from matrices. We start by defining the scalar product
	on $\Rb^{n_1\times \cdots \times n_D}$ as
	\bean
	\langle \Xc, \Yc\rangle & \defeq & \sum_{i_1=1}^{n_1}\cdots  \sum_{i_D=1}^{n_D} x_{i_1,\ldots,i_D} y_{i_1,\ldots,i_D}.
	\eean
	Using this scalar product, we can define the Frobenius norm of tensors as
	\bean
	\|\Xc\|_F & \defeq & \sqrt{\langle \Xc,\Xc\rangle}.
	\eean
    In this work, we shall focus on a family of tensor norms called Schatten-$(p,q)$ norms. The Schatten-$(p,q)$ norm of $\Xc$ is defined by
    \bea\label{schattenpq612}
    \|\Xc\|_{p,q} \defeq \lambda \Big( \sum_{d=1}^D \|\sigma^{(d)}(\Xc)\|_p^q \Big)^{1/q},
    \eea
	where $\sigma^{(d)}(\Xc)$ is the vector of singular values of $\Xc_{(d)}$, called the mode-$d$ spectrum of $\Xc$, and $\lambda$ is a positive constant. 
In the particular case that $p=q=1$ and $\lambda=1/D$, the Schatten-$(1,1)$ norm will be referred to as the \emph{nuclear norm}, and will be denoted by $\|\cdot\|_*$ instead, i.e.
\bean
 \|\Xc\|_* \defeq \frac{1}{D} \sum_{d=1}^D \|\sigma^{(d)}(\Xc)\|_1.
\eean

	\subsection{Orthogonally decomposable tensors}
	\label{odec}
	\begin{defi}
		Let $\Xc$ be a tensor in $\Rb^{n_1\times \cdots \times n_D}$. If
		\bea\label{defi1}\label{odec1}
		\Xc & = & \sum_{i=1}^{r}\alpha_i\cdot u_i^{(1)}\otimes\cdots \otimes u_i^{(D)},
		\eea
		where $r\leq n_1 \wedge \cdots \wedge n_D$, $\alpha_1\geq \cdots \geq \alpha_r > 0$  and
		$\{u_1^{(d)}, \ldots, u_r^{(d)}\}$  is a family of orthonormal vectors for each $d=1,\ldots,D$, then we say
		(\ref{defi1}) is an orthogonal decomposition of $\Xc$ and $\Xc$ is an orthogonally decomposable ({\em odeco}) tensor.
	\end{defi}
	Denote $\alpha=(\alpha_1,\ldots, \alpha_r, 0,\ldots, 0)$ in $\mathbb R^{n_1 \wedge \cdots \wedge n_D}$.
	For each $d\in\{1,\ldots, D\}$, we may complete $\{u_1^{(d)}, \ldots, u_r^{(d)}\}$ with
	$\{u_{r+1}^{(d)},\ldots, w_{n_d}^{(d)}\}$ so that matrix
	$U^{(d)}=(u_1^{(d)},\ldots, u_{n_d}^{(d)})\in\Rb^{n_d\times n_d}$ is orthogonal.
	Using $U^{(1)},\ldots, U^{(D)}$,
	we may write (\ref{defi1}) as
	\bea
	\Xc \label{odec2}
	& = & \diag(\alpha)\times_1 U^{(1)} \times_2 U^{(2)} \cdots\times_D U^{(D)}.
	\eea
	
	\subsection{Symmetric tensors}
	Let $\mathfrak S_D$ be the set of permutations over $\{1,\ldots,D\}$. A $D$-mode cubic tensor $\Xc \in \mathbb R^{nD}$ will be said
	symmetric if for all
	$\tau \in \mathfrak S_D$,
	\bean
	\Xc_{i_1,\ldots,i_D} & = & \Xc_{\tau(i_1),\ldots,\tau(i_D)}
	\eean
	The set of all symmetric tensors of order $n$ will be denoted by $\mathbb S_n$.
	An immediate result is the following useful proposition whose proof is straightforward.
	
	\subsection{Spectrum of tensors \label{image}}
	
	Let $\mathbb E$ denote a subspace of the space of all tensors. Let us define the spectrum as the mapping which to any tensor $\Xc\in \mathbb E$
	associates the vector $\sigma_{\mathbb E}(\Xc)$ given by
	\bean
	\sigma_{\mathbb E}(\Xc) & \defeq & \frac1{\sqrt{D}} \: (\sigma^{(1)}(\Xc),\ldots,\sigma^{(D)}(\Xc)).
	\eean
    Here we stress that the underlying tensor subspace $\Eb$ does make a difference. For instance,
    although $\sigma_{\Rb^{nD}}(\Xc) = \sigma_{\Sb_n}(\Xc)$ for all $\Xc\in\Sb_n$, the two different tensor space $\Rb^{nD}$ and $\Sb$ may result in different subdifferential of the same tensor norm.

	The following result is straight forward.
	\begin{prop}\label{symm}
		If $\Xc$ is either {\em odeco} or symmetric, then $\sigma^{(1)}(\Xc)=\cdots=\sigma^{(D)}(\Xc)$.
	\end{prop}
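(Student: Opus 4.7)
The plan is to handle the two cases separately, each one reducing to a simple structural observation.

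For the \emph{odeco} case, I would start from the compact form (\ref{odec2}), namely $\Xc = \diag(\alpha) \times_1 U^{(1)} \times_2 \cdots \times_D U^{(D)}$. Applying the standard formula for the mode-$d$ matricization of a Tucker-type decomposition gives
\[
\Xc_{(d)} \;=\; U^{(d)} \, [\diag(\alpha)]_{(d)} \, \bigl(U^{(d+1)} \otimes \cdots \otimes U^{(D)} \otimes U^{(1)} \otimes \cdots \otimes U^{(d-1)}\bigr)^T.
\]
The outer factors are orthogonal, since a Kronecker product of orthogonal matrices is orthogonal. The middle matrix $[\diag(\alpha)]_{(d)}$ is the mode-$d$ matricization of a diagonal tensor, so it places each of $\alpha_1, \ldots, \alpha_r$ in a distinct row and distinct column (with zeros elsewhere); its nonzero singular values are therefore exactly $\alpha_1, \ldots, \alpha_r$. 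Since orthogonal transformations preserve singular values, $\sigma^{(d)}(\Xc) = (\alpha_1, \ldots, \alpha_r, 0, \ldots, 0)$ for every $d$, and the conclusion follows.

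For the symmetric case, I would directly examine the Gram matrix $\Xc_{(d)} \Xc_{(d)}^T \in \Rb^{n \times n}$, whose $(i,j)$-th entry is
\[
\sum_{i_1, \ldots, i_{d-1}, i_{d+1}, \ldots, i_D} \Xc_{i_1, \ldots, i_{d-1}, i, i_{d+1}, \ldots, i_D} \cdot \Xc_{i_1, \ldots, i_{d-1}, j, i_{d+1}, \ldots, i_D}.
\]
Applying the transposition $\tau = (1\;d) \in \mathfrak S_D$ to each summation index and invoking the symmetry relation $\Xc_{i_1, \ldots, i_D} = \Xc_{\tau(i_1), \ldots, \tau(i_D)}$, the right-hand side is recognized as $(\Xc_{(1)} \Xc_{(1)}^T)_{i,j}$. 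Hence $\Xc_{(d)} \Xc_{(d)}^T = \Xc_{(1)} \Xc_{(1)}^T$ for every $d$, and the singular values of $\Xc_{(d)}$ --- being the decreasing-order square roots of the eigenvalues of this common matrix --- coincide across all modes.

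The main obstacle is essentially bookkeeping on the \emph{odeco} side: one must verify that, under the ``forward cyclic ordering'' convention adopted in Section \ref{gentens}, the Kronecker product above appears in the correct order and $[\diag(\alpha)]_{(d)}$ indeed has the advertised sparse monomial structure. Both checks are routine once the conventions are unpacked, and the symmetric case is in fact almost immediate once the Gram-matrix reformulation is written down.
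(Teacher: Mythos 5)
Your proposal is correct. Note that the paper offers no proof at all for this proposition --- it is simply declared ``straightforward'' --- so there is no argument of the authors' to compare against; your two-case argument is the natural way to fill the gap. The \emph{odeco} half (matricize the Tucker form, observe that $[\diag(\alpha)]_{(d)}$ has the $\alpha_i$ in distinct rows and columns, and use invariance of singular values under the orthogonal outer factors) is exactly the computation the authors carry out in a commented-out proposition elsewhere in the source, and the symmetric half via the identity $\Xc_{(d)}\Xc_{(d)}^T=\Xc_{(1)}\Xc_{(1)}^T$ is sound: applying the transposition exchanging slots $1$ and $d$ to both factors and relabelling the summation indices turns one Gram matrix into the other, and the mode-$d$ singular values are determined by that Gram matrix. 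The only cosmetic caveat is that the paper's displayed symmetry condition is itself mistyped (it applies $\tau$ to index \emph{values} rather than index \emph{positions}); you quote that notation but use it in the correct, intended sense, which is what the argument requires.
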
	
	
	\section{Further results on the spectrum}
	
	In this section, we will present some further results on the spectrum such as the question of characterizing the
	image of the spectrum and the subdifferential of a function of the spectrum.
	
	\subsection{The Von Neumann inequality for tensors}
	Von Neumann's inequality says that for any two matrices $X$ and $Y$ in $\mathbb R^{n_1\times n_2}$, we have
	\bean
	\langle X,Y\rangle & \le & \langle \sigma(X),\sigma(Y)\rangle,
	\eean
	with equality when the singular vectors of $X$ and $Y$ are equal, up to permutations when the singular values have multiplicity greater than one. This result has proved useful for the study of the subdifferential of unitarily invariant convex functions of the spectrum in the matrix case in \cite{lewis1995convex}.
	In order to study the subdifferential of the norms of symmetric tensors, we will need a generalization of this result to higher orders.
	This  was worked out in \cite{chretien2015neumann}.
	\begin{defi}
		We say that a tensor $\Sc$ is blockwise decomposable if there exists an integer $B$ and if, for all $d=1,\ldots,D$, there exists a partition $I_1^{(d)} \cup \ldots \cup I_B^{(d)}$ into
		disjoint index subsets of $\{1,\ldots,n_d\}$, such that $\Xc_{i_1,\ldots,i_D}=0$ if for all $b=1,\ldots,B$, $(i_1,\ldots,i_D) \not \in I_b^{(1)}\times \ldots \times I_b^{(D)}$.
	\end{defi}
	An illustration of this block decomposition can be found in Figure \ref{blocks}.
	The following result is a generalization of von Neumann's inequality from matrices to tensors. It is proved in \cite{chretien2015neumann}.
	\begin{theo}\label{VN3}
		Let $\Xc,\Yc\in \Rb^{n_1\times \cdots \times n_D}$ be tensors.
		Then for all $d=1,\ldots,D$, we have
		\bea\label{vonneumann1}
		\langle \Xc, \Yc\rangle \leq \langle \sigma^{(d)}(\Xc), \sigma^{(d)}(\Yc) \rangle.
		\eea
		Equality in (\ref{vonneumann1}) holds simultaneously for all $d=1,\ldots,D$ if and only there
		exist orthogonal matrices $W^{(d)}\in\Rb^{n_d\times n_d}$ for $d=1,\ldots, D$
		and tensors $\Dc(\Xc),\Dc(\Yc)\in\Rb^{n_1\times\cdots\times n_D}$ such that
		\bean
		\Xc&=& \Dc(\Xc) \times_1 W^{(1)} \cdots \times_{D} W^{(D)}, \\
		\Yc&=& \Dc(\Yc) \times_1 W^{(1)} \cdots \times_{D} W^{(D)},
		\eean
		where $\Dc(\Xc)$ and $\Dc(\Yc)$ satisfy the following properties:
		\begin{enumerate}
			\item[(i)] $\Dc(\Xc)$ and $\Dc(\Yc)$ are block-wise decomposable with the same number of blocks, which we will denote by $B$,
			\item[(ii)] the blocks $\{\Dc_b(\Xc)\}_{b=1,\ldots,B}$ (resp. $\{\Dc_b(\Yc)\}_{b=1,\ldots,B}$) on the diagonal of $\Dc(\Xc)$ (resp. $\Dc(\Yc)$) have the same sizes,
			\item[(iii)] for each $b=1,\ldots, B$ the two blocks $\Dc_b(\Xc)$ and $\Dc_b(\Yc)$ are proportional.
		\end{enumerate}
	\end{theo}
	
	\begin{figure}
		\begin{center}\begin{tikzpicture}[scale=0.70]
			
			\draw[thick] (0,0) -- (0,8) -- (8,8)--(8,0)--(0,0);
			\draw[thick] (8,0) -- (12,2) -- (12,10) -- (4,10) -- (0,8);
			\draw[thick] (8,8) -- (12,10);
			\draw[dashed] (4,2) -- (12,2);
			\draw[dashed] (4,2) -- (4,10);
			\draw[dashed] (4,2) -- (0,0);
			
			\draw (0,7) -- (2,7) -- (2,8) -- (3, 8.5);
			\draw (1, 8.5) -- (3, 8.5) -- (3, 7.5) -- (2, 7);
			
			
			\draw (3, 7.5) -- (3, 4.5) -- (4.5,4.5) -- (4.5, 7.5) -- (3, 7.5);
			\draw (3, 7.5) -- (4.5, 8.25) -- (6, 8.25) -- (6, 5.25) -- (4.5, 4.5);
			\draw (4.5, 7.5) -- (6, 8.25);
			
			
			\draw (6, 5.25) -- (6, 4.25) -- (7, 4.25) -- (7, 5.25) -- (6, 5.25);
			\draw (7.5, 4.5) -- (7.5, 5.5) -- (6.5, 5.5);
			
			\draw (6, 5.25) -- (6.5, 5.5);
			\draw (7, 5.25) -- (7.5, 5.5);
			\draw (7, 4.25) -- (7.5, 4.5);
			
			
			\draw (7.5, 4.5) -- (7.5, 4) -- (10, 4) -- (10, 4.5) -- (7.5, 4.5);
			\draw (7.5, 4.5) -- (7.75, 4.625) -- (10.25, 4.625)-- (10.25, 4.125)-- (10, 4);
			\draw (10.25, 4.625) -- (10, 4.5);

			\draw (10.25, 4.125) -- (10.25, 1.625) -- (11.25, 1.625) -- (11.25, 4.125) -- (10.25, 4.125);
			\draw (10.25, 4.125) -- (11, 4.5) -- (12, 4.5) -- (11.25, 4.125);
			\end{tikzpicture}
		\end{center}
		\caption{A block-wise diagonal tensor.}
		\label{blocks}
	\end{figure}
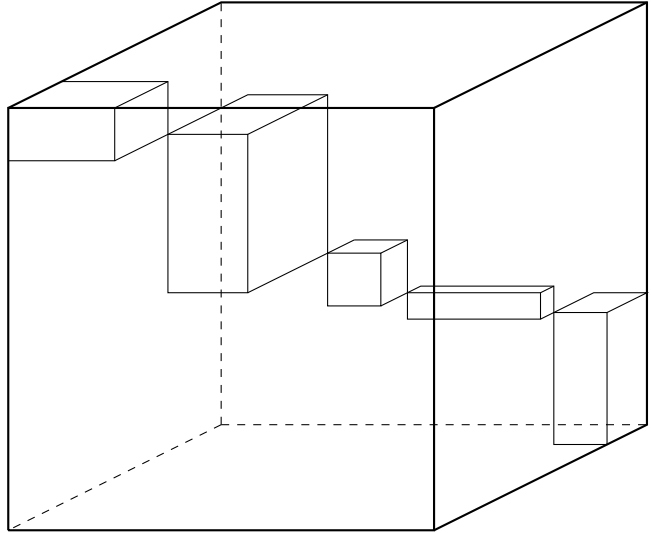
	
	\subsection{Surjectivity of the spectrum}
	Note that any diagonal tensor is both {\em odeco} and symmetric. A diagonal tensor $\Xc$ with non-negative diagonal entries $(\lambda_1,\ldots,\lambda_n)$ satisfies
	$\sigma^{(1)}(\Xc)=\cdots=\sigma^{(D)}(\Xc)=(\lambda_1,\ldots,\lambda_n)^\TR$. Therefore, for any non-negative vector $s$, there exist a symmetric and {\em odeco} tensor $\Xc$ such that
	$\sigma(\Xc)=(s,\ldots,s)$.
	
	Notice that general tensors have different spectra along all the different modes and the question of analysing the surjectivity is much more subtle.
	
	
	\section{The subdifferential of functions of the spectrum}
	\label{subdf}
	In this section, we present a characterization of the subdifferential of a convex function
	$f: \Rb^n\times \cdots \times \Rb^n\mapsto\Rb$ of the spectrum for cubical tensors on symmetric and {\em odeco} tensors.

	\subsection{Lewis' characterization of the subdifferential}	
	\label{subcharac}
	Let us recall that the spectrum is
	defined on a subspace $\mathcal E$.
	In this section, we recall the result of Lewis in the setting of tensor spectra, which characterizes the
	subdifferential if the formula
	\begin{align}
	(f \circ \sigma_{\mathbb E})^*=f^* \circ \sigma_{\mathbb E}
	\label{Conjformula}
	\end{align}
	holds on the domain of definition of $\sigma_{\mathbb E}$. The proof is exactly the same as in \cite{lewis1995convex} thanks to the tensor version of Von Neumann's inequality.  We recall it here for the sake of completeness.
	\begin{theo}\label{119a}
		Let $f: \Rb^{n}\times\cdots\times\Rb^n \mapsto \Rb$ be a convex function. Let $\Xc$ and $\Yc$ be two tensors
		in $\mathbb R^{n D}$. If \eqref{Conjformula} holds,
		then  $\Yc\in\partial (f\circ\sigma_{\mathbb E})(\Xc)$ if and only if the following conditions are satisfied:
		\begin{enumerate}
			\item[(i)] $\sigma_{\mathbb E} (\Yc) \in\partial f(\sigma_{\mathbb E}(\Xc))$,
			\item[(ii)] equality holds in the Von Neumann inequality, i.e. $\langle \Xc, \Yc\rangle = \langle \sigma_{\mathbb E}(\Xc), \sigma_{\mathbb E}(\Yc)\rangle$.
		\end{enumerate}
	\end{theo}
	\begin{proof}
		As is well known, $\Yc\in\partial (f\circ\sigma)(\Xc)$ if and only if
		\begin{align*}
		(f\circ\sigma_{\mathbb E})(\Xc) + (f\circ \sigma_{\mathbb E})^*(\Yc) & = \langle \Xc, \Yc\rangle.
		\end{align*}
		Recall that we assumed the following equality to hold
		\begin{align*}
		(f\circ\sigma_{\mathbb E})(\Xc) + (f\circ \sigma_{\mathbb E})^*(\Yc) & = f(\sigma_{\mathbb E}(\Xc)) + f^*
		(\sigma_{\mathbb E}(\Yc)).
		\end{align*}
		On the other hand,
		\begin{align*}
		f(\sigma_{\mathbb E}(\Xc)) + f^*(\sigma_{\mathbb E}(\Yc)) & \geq \langle \sigma_{\mathbb E}(\Xc), \sigma_{\mathbb E}(\Yc) \rangle,
		\end{align*}
		where equality takes place if and only if
		\begin{align*}
		\sigma_{\mathbb E}(\Yc) \in\partial f (\sigma_{\mathbb E}(\Xc)).
		\end{align*}
		Finally, by the von Neumann inequality, we have
		\begin{align*}
		\langle \sigma_{\mathbb E}(\Xc), \sigma_{\mathbb E}(\Yc) \rangle
		\ge \langle  \Xc, \Yc \rangle,
		\end{align*}
		where the equality takes place if and only if the equality condition is satisfied.
	\end{proof}

	\subsection{The symmetric case}
	Throughout this section $\mathbb E$
	will be the set $\mathbb S_n$ of all symmetric
	tensors in $\mathbb R^{n D}$.
	\subsubsection{Proving \eqref{Conjformula} for symmetric tensors}
	There exists a simple formula for the conjugate of the composition of the spectrum with a
	convex function. This formula will be helpful for gaining useful information on the
	subdifferential of convex functions of the spectrum.
	\begin{theo} \label{conjugate1}
		Let $f: \Rb^{n}\times\cdots\times\Rb^n \mapsto \Rb$ be a convex function. Let $\Xc$ be a {\em symmetric} tensor in $\mathbb R^{n D}$. Then,
		\bean
		(f\circ\sigma_{\mathbb S_n})^*(\Xc) & = & f^*\circ \sigma_{\mathbb S_n}(\Xc)
		\eean
	\end{theo}
	
	\begin{proof} { This proof mimics the proof of} \cite[Theorem 2.4]{lewis1995convex}. Let
		\begin{align*}
		\Xc & = \Sc \times_1 U \cdots \times_D U
		\end{align*}
		denote the Higher Order singular value decomposition of $\Xc$.
		By definition of conjugacy, we have
		\bean
		(f\circ\sigma_{\mathbb S_n})^*(\Xc)
		& = & \sup_{\Yc\in\Rb^{n\times \cdots\times n}} \quad \langle \Xc, \Yc \rangle - f(\sigma_{\mathbb S_n}(\Yc)).
		\eean
		By the tensor von Neumann inequality we have
		%
		\bea
		& & \sup_{\Yc\in\Rb^{n\times \cdots \times n}}\quad \langle \Xc, \Yc \rangle -  f(\sigma_{\mathbb S_n}(\Yc)) \nonumber \\
		& & \hspace{3cm} \le \sup_{\Yc\in\Rb^{n\times \cdots \times n}} \quad
		\frac{1}{D}\sum_{d=1}^D \langle \sigma^{(d)}(\Xc), \sigma^{(d)}(\Yc)\rangle - f(\sigma_{\mathbb S_n}(\Yc)) \label{ystar} \\
		\label{tilt} &  & \hspace{3cm} \le \sup_{s_1,\ldots,s_D\in\Rb^n}
		\quad \frac{1}{\sqrt{D}}\sum_{d=1}^D \langle \sigma^{(d)}(\Sc_\Xc), s_d\rangle-
		f\Bigg(\frac1{\sqrt{D}} (s_{1},\ldots, s_{D})\Bigg).
		\eea
		Notice that the maximizer $s^*$ in the right hand side term of this last equation satisfies $s^*\ge 0$ and $s^*_1=\ldots=s^*_D$ by the symmetry
		of $\Xc$ and $f$.
		Now, based on Section \ref{image}, there
		exists a tensor $\Sc^*$, whose support can clearly be constrained
		to be included in $S$, such that $s^*=\sigma(\Sc^*)$. Thus, we obtain that
		\begin{align}
		&  \sup_{s_1,\ldots,s_D\in\Rb^n}
		\quad \frac{1}{\sqrt{D}}\sum_{d=1}^D \langle \sigma^{(d)}(\Sc_\Xc), s_d\rangle
		-f\Bigg(\frac1{\sqrt{D}} (s_{1},\ldots, s_{D})\Bigg) \nonumber \\
		&  \hspace{3cm} = \: \frac{1}{D}\sum_{d=1}^D \langle \sigma^{(d)}(\Sc_\Xc), \sigma^{(d)}(\Sc^*)\rangle
		-f(\sigma_{\mathbb S_n}(\Sc^*)).
		\nonumber
		\end{align}
		On the one hand, using that $\Sc^*$ has support included in $S$ and Theorem \ref{VN3}, we obtain that
		\begin{align}
		&  \sup_{s_1,\ldots,s_D\in\Rb^n}
		\quad \frac{1}{\sqrt{D}}\sum_{d=1}^D \langle \sigma^{(d)}(\Sc_\Xc), s_d\rangle
		-f\Bigg(\frac1{\sqrt{D}}(s_{1},\ldots, s_{D})\Bigg) \nonumber \\
		&  \hspace{3cm} = \: \langle \Xc, \Xc^*\rangle-f(\sigma_{\mathbb S_n}(\Xc^*))
		\nonumber
		\end{align}
		where
		\begin{align*}
		\Xc^* & = \Sc^* \times_1 U \cdots \times_D U.
		\end{align*}
		From this, we deduce that
		\begin{align}
		&  \sup_{s_1,\ldots,s_D\in\Rb^n}
		\quad \frac{1}{\sqrt{D}}\sum_{d=1}^D \langle \sigma^{(d)}(\Sc_\Xc), s_d\rangle
		-f\Bigg(\frac1{\sqrt{D}}(s_{1},\ldots, s_{D})\Bigg) \nonumber \\
		&  \hspace{3cm} = \: \sup_{\Yc\in\Rb^{n\times \cdots \times n}}\quad \langle \Xc, \Yc \rangle -  f(\sigma_{\mathbb S_n}(\Yc))\\
		&  \hspace{3cm} = \: (f\circ \sigma_{\mathbb S_n})^*(\Xc).
		\nonumber
		\end{align}
		On the other hand, using the fact that $\sigma_{\mathbb S_n}(\Sc_\Xc)=\sigma_{\mathbb S_n}(\Xc)$,
		\begin{align}
		&  \sup_{s_1,\ldots,s_D\in\Rb^n}
		\quad \frac{1}{\sqrt{D}}\sum_{d=1}^D \langle \sigma^{(d)}(\Sc_\Xc), s_d\rangle
		-f\Bigg(\frac1{\sqrt{D}}(s_{1},\ldots, s_{D})\Bigg) \nonumber \\
		&  \hspace{3cm} = \sup_{s_1,\ldots,s_D\in\Rb^n} \quad \frac{1}{\sqrt{D}}\sum_{d=1}^D
		\langle \sigma^{(d)}(\Sc_\Xc), s_d\rangle-f\Bigg(\frac1{\sqrt{D}}(s_{1},\ldots, s_{D})\Bigg) \nonumber \\
		&  \hspace{3cm} = \: f^*\circ \sigma_{{\mathbb S}_n}(\Xc).
		\nonumber
		\end{align}
		Therefore,
		\bean
		(f\circ\sigma_{\mathbb S_n})^*(\Xc) & = & f^*\circ \sigma_{\mathbb S_n}(\Xc)
		\eean
		as announced.
	\end{proof}

	\subsubsection{A closed form formula for the subdifferential}
	We now present a closed form formula for the subdifferential of a symmetric function of the spectrum of a symmetric tensor.
	\begin{cor}
		Let $f: \Rb^{n}\times\cdots\times\Rb^n \mapsto \Rb$ be a symmetric function, i.e.
		\begin{align}
		\label{symf}
		f(s_1,\ldots,s_D) & = f(s_{\tau(1)},\ldots,s_{\tau(D)})
		\end{align}
		for all $\tau \in \mathfrak{S}_S$.
		Then necessary and sufficient conditions for an
		symmetric tensor $\Yc$ to belong to $\partial (f\circ\sigma_{\mathbb R^{nD}})(\Xc)$ are
		\begin{enumerate}
			\item $\Yc$ has the same mode-$d$ singular spaces as $\Xc$ for all $d=1,\ldots,D$
			\item $\sigma_{\mathbb R^{nD}}(\Yc)\in\partial f(\sigma_{\mathbb R^{nD}}(\Xc))$.
		\end{enumerate}
	\end{cor}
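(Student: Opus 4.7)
The plan is to deduce the corollary from Theorem \ref{119a} applied with $\mathbb{E}=\mathbb{S}_n$, whose hypothesis \eqref{Conjformula} has just been supplied by Theorem \ref{conjugate1}. Because both $\Xc$ and $\Yc$ are symmetric, Proposition \ref{symm} gives $\sigma_{\mathbb{R}^{nD}}(\cdot)=\sigma_{\mathbb{S}_n}(\cdot)$ at these tensors, so condition (i) of Theorem \ref{119a} translates verbatim into condition 2 of the corollary. The substantive task is then to match the Von Neumann equality $\langle \Xc,\Yc\rangle = \langle \sigma(\Xc),\sigma(\Yc)\rangle$ (condition (ii) of Theorem \ref{119a}) with the geometric condition 1, namely that $\Xc$ and $\Yc$ share the same mode-$d$ singular spaces for every $d$.

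For the direction ``Von Neumann equality $\Rightarrow$ shared singular spaces'', I invoke the equality characterisation of Theorem \ref{VN3}: equality forces common orthogonal factors $W^{(1)},\ldots,W^{(D)}$ together with blockwise decomposable cores $\Dc(\Xc)$ and $\Dc(\Yc)$ whose corresponding blocks are proportional. Reading off each mode-$d$ matricization, the columns of $W^{(d)}$ form a common left-singular basis for $\Xc_{(d)}$ and $\Yc_{(d)}$, and grouping them by block identifies the singular subspaces of the two matricizations.

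For the reverse direction, start from shared mode-$d$ singular spaces. Since $\Xc$ is symmetric, its HOSVD can be chosen with a single orthogonal factor $U$ common to all modes (as exploited in the proof of Theorem \ref{conjugate1}); symmetry of $\Yc$ and the assumption that $\Yc$ shares its mode-$d$ singular spaces with $\Xc$ then let us use the same $U$ as factor in an HOSVD of $\Yc$. It remains to verify the block-proportionality condition (iii) of Theorem \ref{VN3}. Here the symmetry of $f$ in \eqref{symf} enters decisively: it implies that $\partial f(\sigma(\Xc))$ is invariant under every coordinate permutation fixing $\sigma(\Xc)$, so any element $\sigma(\Yc)\in\partial f(\sigma(\Xc))$ must be constant on each group of equal entries of $\sigma(\Xc)$. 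Translating this back through the common HOSVD yields the block-proportionality, and Theorem \ref{VN3} then returns the Von Neumann equality, completing the equivalence.

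The step I expect to be the main obstacle is precisely this last one: deriving block-proportionality from condition 2 combined with the symmetry of $f$. Multiplicities in $\sigma(\Xc)$ make the choice of singular basis within each multiplicity block non-unique, and one has to argue that this gauge freedom is compatible with the proportionality of the corresponding blocks of the cores, rather than producing a $\Yc$ whose core has a finer (or misaligned) block structure than that of $\Xc$. The symmetry hypothesis \eqref{symf} is exactly what rules this out.
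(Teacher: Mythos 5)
Your high-level strategy is exactly the paper's: the paper's entire proof of this corollary reads ``Combine Theorem \ref{conjugate1} with Theorem \ref{119a}'', i.e.\ verify hypothesis \eqref{Conjformula} via Theorem \ref{conjugate1} and then read off the two conditions of Theorem \ref{119a}. Your use of Proposition \ref{symm} to identify $\sigma_{\mathbb R^{nD}}$ with $\sigma_{\mathbb S_n}$ on symmetric tensors, and your translation of condition (i) into condition 2, match what the paper (implicitly) does.

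Where you go beyond the paper is in trying to prove that, given condition 2, the Von Neumann equality of Theorem \ref{119a}(ii) is equivalent to the shared-singular-spaces condition 1; the paper simply asserts this identification with no argument. Your forward direction (equality $\Rightarrow$ common orthogonal factors, via the equality case of Theorem \ref{VN3}) is sound. The reverse direction as written does not work: the hypothesis \eqref{symf} is invariance of $f$ under permutations of the $D$ mode-arguments $s_1,\ldots,s_D$, not under permutations of the coordinates inside a single $s_d$, so it gives no control over how an element of $\partial f(\sigma(\Xc))$ behaves on groups of equal entries of $\sigma(\Xc)$. Moreover, even if $\partial f(\sigma(\Xc))$ were stable under the relevant permutation group, stability of the \emph{set} does not force each of its \emph{elements} to be fixed by the group, which is what you would need for constancy on multiplicity blocks and hence for the block-proportionality required in item (iii) of Theorem \ref{VN3}. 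So the ``main obstacle'' you single out is real, and your proposed resolution does not close it --- though, to be fair, the paper's one-line proof does not close it either.
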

	\begin{proof}
		Combine Theorem \ref{conjugate1} with Theorem \ref{119a}.
	\end{proof}
	
	\subsection{The {\em odeco} case}
	Throughout this subsection $\mathbb E=\mathbb R^{nD}$.
	\subsubsection{Proving \eqref{Conjformula} for {\em odeco} tensors}
	As for the symmetric case, we start with a result in the spirit of \eqref{Conjformula}.
	\begin{theo} \label{conjugate2}
		Let $f: \Rb^{n}\times\cdots\times\Rb^n \mapsto \Rb$ satisty property
		\begin{align}
		\label{symff}
		f(s_1,\ldots,s_D) & = f(s_{\tau(1)},\ldots,s_{\tau(D)})
		\end{align}
		for all $\tau \in \mathfrak{S}_S$.
		Then for all {\em odeco} tensors $\Xc$, we have
		\bea\label{114a}
		(f\circ\sigma_{\mathbb R^{nD}})^* (\Xc)= f^*(\sigma_{\mathbb R^{nD}}(\Xc))
		\eea
	\end{theo}
	\begin{proof}
		By definition, equality (\ref{114a}) is equivalent to
		\begin{align}
		\sup_{\Yc}\{\langle \Xc, \Yc \rangle -  f(\sigma_{\mathbb R^{nD}}(\Yc))\} & =\sup_{s_1,\ldots,s_D\in\Rb^n}\left\{\frac{1}{D}\sum_{d=1}^D\langle \sigma^{(d)}(\Xc), s_d\rangle -
		f(s_1,\ldots,s_D) \right\}.
		\label{114g}
		\end{align}
		Consider the optimization problem
		\bea\label{114b}
		\sup_{\Yc}\Big\{\langle \Xc, \Yc \rangle,\quad  f(\sigma_{\mathbb R^{nD}}(\Yc))\leq C\Big\}
		\eea
		and
		\bea \label{114c}
		\sup_{s_1,\ldots,s_D\in\Rb^n}\left\{\frac{1}{D}\sum_{d=1}^D\langle \sigma^{(d)}(\Xc), s_d\rangle,\quad
		f(s_1,\ldots,s_D)\leq C \right\}.
		\eea
		Clearly, we have
		\bea
		&&\sup_{\Yc}\{\langle \Xc, \Yc \rangle,\quad  f(\sigma_{\mathbb R^{nD}}(\Yc))\leq C\} \nonumber \\
		&\leq& \sup_{\Yc}\left\{\frac{1}{D}\sum_{d=1}^D\langle \sigma^{(d)}(\Xc), \sigma^{(d)}(\Yc) \rangle,\quad  f(\sigma_{\mathbb R^{nD}}(\Yc))\leq C\right\} \nonumber \\
		&\leq& \sup_{s_1,\ldots,s_D\in\Rb^n}\left\{\frac{1}{D}\sum_{d=1}^D\langle \sigma^{(d)}(\Xc), s_d\rangle,\quad
		f(s_1,\ldots,s_D)\leq C \right\}. \label{114e}
		\eea
		Assume that the supremum (\ref{114c}) is achieved at $(s^*_1,\ldots,s^*_D)$. Denote
		\bean
		s^{\dagger} = \frac{1}{D!} \sum_{\tau} s_{\tau(k)}.
		\eean
		Clearly, $s^{\dagger}$ is independent of $k$. Moreover, we have
		\bean
		\frac{1}{D}\sum_{d=1}^D\langle \sigma^{(d)}(\Xc), s_d\rangle
		&=& \frac{1}{D}\sum_{d=1}^D\langle \sigma^{(d)}(\Xc), s^\dagger\rangle \\
		f(s^\dagger,\ldots,s^\dagger)
		&\leq& \frac{1}{D!}\sum_{\tau}f(s_{\tau(1)},\ldots,s_{\tau(D)}).
		\eean
		Using (\ref{symf}), we have
		\bean
		\frac{1}{D}\sum_{d=1}^D\langle \sigma^{(d)}(\Xc), s_d\rangle
		& = & \frac{1}{D}\sum_{d=1}^D\langle \sigma^{(d)}(\Xc), s^\dagger\rangle \\
		f(s^\dagger,\ldots,s^\dagger)
		& \leq & C.
		\eean
		This means that the supremum of (\ref{114c}) can also be achieved at
		$(s^\dagger,\ldots,s^\dagger)$.
		Now take an {\em odeco} tensor $\Yc^\dagger$ such that $\sigma^{(d)}(\Yc^\dagger)=s^\dagger$ and $\Yc^\dagger$ has the same singular matrices as $\Xc$. For this particular $\Yc^\dagger$, we have by the equality condition of the generalized von Neumann's Theorem
		\bean
		\langle \Xc, \Yc^\dagger \rangle = \langle \sigma_{\mathbb R^{nD}}(\Xc), \sigma_{\mathbb R^{nD}}(\Yc^\dagger) \rangle
		=\frac{1}{D}\sum_{d=1}^D\langle \sigma^{(d)}(\Xc), s^\dagger \rangle
		\eean
		and
		\bean
		f(\sigma_{\mathbb R^{nD}}(\Yc^\dagger)) = f(s^\dagger,\ldots,s^\dagger) \leq C.
		\eean
		We then deduce that
		\bea
		&&\sup_{\Yc}\{\langle \Xc, \Yc \rangle,\quad  f(\sigma_{\mathbb R^{nD}}(\Yc))\leq C\}  \geq \nonumber \\
		&& \hspace{1cm} \langle \Xc, \Yc^\dagger \rangle \,\,\, = \sup_{s_1,\ldots,s_D\in\Rb^n}\left\{\frac{1}{D}\sum_{d=1}^D\langle \sigma^{(d)}(\Xc), s_d\rangle,\quad
		f(s_1,\ldots,s_D)\leq C \right\}. \label{114f}
		\eea
		Combining (\ref{114e}) and (\ref{114f}) gives
		\bean
		&&\sup_{\Yc}\{\langle \Xc, \Yc \rangle,\quad  f(\sigma_{\mathbb R^{nD}}(\Yc))\leq C\} = \\
		&& \hspace{1cm} \sup_{s_1,\ldots,s_D\in\Rb^n}\left\{\frac{1}{D}\sum_{d=1}^D\langle \sigma^{(d)}(\Xc), s_d\rangle,\quad
		f(s_1,\ldots,s_D)\leq C \right\}.
		\eean
		Then (\ref{114g}) follows.
	\end{proof}
	
	\subsubsection{A closed form formula for a subset of the subdifferential}	
	The following Corollary is a direct consequence of Theorem \ref{119a} and Theorem \ref{conjugate2}.
	\begin{coro}
		Let $f: \Rb^{n}\times\cdots\times\Rb^n \mapsto \Rb$ satisty property
		\begin{align}
		\label{symfff}
		f(s_1,\ldots,s_D) & = f(s_{\tau(1)},\ldots,s_{\tau(D)})
		\end{align}
		for all $\tau \in \mathfrak{S}_S$.
		Let $\Xc$ be an {\em odeco} tensor. Then necessary and sufficient conditions for an
		{\em odeco} tensor $\Yc$ to belong to $\partial (f\circ\sigma_{\mathbb R^{nD}})(\Xc)$ are
		\begin{enumerate}
			\item $\Yc$ has the same mode-$d$ singular spaces as $\Xc$ for all $d=1,\ldots,D$
			\item $\sigma_{\mathbb R^{nD}}(\Yc)\in\partial f(\sigma_{\mathbb R^{nD}}(\Xc))$.
		\end{enumerate}
		Moreover, the closure of the convex combination of these {\em odeco} tensors
		belongs to $\partial (f\circ\sigma_{\mathbb R^{nD}})(\Xc)$.
	\end{coro}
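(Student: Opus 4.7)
The plan is to combine Theorem~\ref{119a} with Theorem~\ref{conjugate2}, as suggested in the authors' one-line proof. First I would verify that the hypothesis \eqref{Conjformula} of Theorem~\ref{119a}, namely $(f\circ\sigma_{\mathbb R^{nD}})^{*}=f^{*}\circ\sigma_{\mathbb R^{nD}}$, is satisfied at the odeco tensor $\mathcal{X}$; this is exactly the content of Theorem~\ref{conjugate2}. Theorem~\ref{119a} then reduces the characterization of the subdifferential to the two requirements that (i) $\sigma_{\mathbb R^{nD}}(\mathcal{Y})\in\partial f(\sigma_{\mathbb R^{nD}}(\mathcal{X}))$ and (ii) equality holds in the tensor von Neumann inequality, $\langle\mathcal{X},\mathcal{Y}\rangle=\langle\sigma_{\mathbb R^{nD}}(\mathcal{X}),\sigma_{\mathbb R^{nD}}(\mathcal{Y})\rangle$. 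Condition (i) is literally condition 2 of the corollary, so only (ii) needs to be translated into the geometric statement appearing as condition 1.

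Next I would show that, for two odeco tensors, equality in von Neumann is equivalent to sharing the same mode-$d$ singular spaces for every $d$. The easy direction is direct: if $\mathcal{X}=\diag(\alpha)\times_{1}U^{(1)}\cdots\times_{D}U^{(D)}$ and $\mathcal{Y}=\diag(\beta)\times_{1}U^{(1)}\cdots\times_{D}U^{(D)}$ with common orthogonal factors, then taking $W^{(d)}=U^{(d)}$ in Theorem~\ref{VN3} exhibits both cores $\mathcal{D}(\mathcal{X})$, $\mathcal{D}(\mathcal{Y})$ as simultaneously diagonal, hence trivially block-decomposable into $1\times\cdots\times 1$ blocks whose corresponding entries are nonnegative and therefore proportional; so the equality case of Theorem~\ref{VN3} applies. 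For the converse, I would invoke the equality case of Theorem~\ref{VN3} to obtain common orthogonal factors $W^{(1)},\ldots,W^{(D)}$ and block-decomposable cores with pairwise proportional blocks, then use the odeco structure of both $\mathcal{X}$ and $\mathcal{Y}$ to argue that the $W^{(d)}$ can be aligned with their HOSVD factors.

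The hard part will be this converse alignment, particularly when the singular values $\alpha_{i}$ or $\beta_{i}$ have multiplicities, in which case the HOSVD factors are only defined up to an orthogonal rotation within each eigenspace. The key observation is that proportionality of the blocks forces the nonzero singular directions of $\mathcal{X}$ and $\mathcal{Y}$ to live in the same subspace for every mode, which is precisely the notion of "same singular spaces" appearing in condition~1; details of the multiplicity argument mirror the matrix case in \cite{lewis1995convex}.

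Finally, the \emph{moreover} statement is immediate once the set of odeco tensors satisfying conditions 1 and 2 has been identified with a subset of $\partial(f\circ\sigma_{\mathbb R^{nD}})(\mathcal{X})$: since any subdifferential of a convex function is a closed convex subset of the ambient space, the closed convex hull of any collection of its elements remains inside it. No further computation is required for this part.
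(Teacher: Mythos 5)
Your proposal is correct and follows essentially the same route as the paper, whose entire proof is the one-line remark that the corollary is a direct consequence of Theorem~\ref{119a} and Theorem~\ref{conjugate2}; the extra work you do (translating the von Neumann equality case of Theorem~\ref{VN3} into the ``same mode-$d$ singular spaces'' condition, and deducing the \emph{moreover} clause from the fact that a subdifferential is closed and convex) simply fills in details the paper leaves implicit. One small correction: in the proof of Theorem~\ref{119a} the conjugacy formula \eqref{Conjformula} is actually applied to the candidate subgradient $\Yc$, via $(f\circ\sigma_{\mathbb R^{nD}})^*(\Yc)=f^*(\sigma_{\mathbb R^{nD}}(\Yc))$, rather than to $\Xc$ as you state, so Theorem~\ref{conjugate2} must be invoked at $\Yc$ --- which is precisely why the corollary restricts $\Yc$ to be \emph{odeco}.
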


	\section{The subdifferential of tensor Schatten norms for symmetric and {\em odeco} tensors}
	\label{applischatten}
	
	In this section, we compute the subdifferential of the Schatten norm \eqref{schattenpq612} for symmetric and {\em odeco} tensors. 
	Consider $f$: $\mathbb R^{n}\times \cdots \times \mathbb R^{n} \mapsto \mathbb R$ defined by
	\begin{align}
	\label{schatt}
	f(s_1,\ldots,s_D) & = \lambda\Big(\sum_{d=1}^D \|s_d\|_p^q\Big)^{1/q}
	\end{align}
	for some integers $p,q\geq 1$ and constant $\lambda>0$. Clearly, $f$ is a convex function and 
	\bean
	\|\cdot\|_{p,q}= f \Big( \sigma^{(1)}(\cdot),\ldots,\sigma^{(D)}(\cdot)\Big).
	\eean

	\subsection{The case $p,q>1$}
In this case we can write (\ref{schatt}) as
	\begin{align}
	f(s_1,\ldots,s_D) & =
	\lambda\!\!\!\!\! \sup_{\Vert w\Vert_{q^*}=1 \atop  \Vert v_d\Vert_{p^*}=1, d=1,\ldots,D}
	 \ \left\{
	\sum_{d=1}^D \ w_d \ \langle v_d,s_d\rangle \right\}.
	\label{variaschatt}
	\end{align}
	Notice that since the supremum in \eqref{variaschatt} is taken over a compact set and the function to be maximized is continuous, then this supremum is attained. Let $\mathcal V\mathcal W^*$ denote the set of maximizers in the variational formulation of $f$ \eqref{variaschatt}. Then, by
	\cite{hiriart2013convex} the subdifferential of $f$ is given by
	\begin{align*}
	\partial f(v_1,\ldots,v_D) & =\lambda\,\,
	\overline{\rm conv} \Big\{
	(w^*_1 v^*_1,\ldots, w^*_D v^*_D) \mid (v_1^*,\ldots,v_D^*,w^*) \in \mathcal VW^* \Big\}.
	\end{align*}
	Notice that $\mathcal V\mathcal W^*$ is fully characterized by
	\begin{align*}
	v^*_d & =
	\begin{cases}
	s_d / \Vert s_d\Vert_{p^*} \textrm{ if } s_d \neq 0 \\
	\\
	B_{p^*} \textrm{ otherwise }
	\end{cases} \\
	w^* & =
	\begin{cases}
	\omega^* / \Vert \omega^* \vert_{q^*} \textrm{ if } \omega^* \neq 0 \\
	\\
	B_{q^*} \textrm{ otherwise}.
	\end{cases}
	\end{align*}
	with
	\begin{align*}
	\omega^* & = (\langle v^*_d,s_d\rangle)_{d=1}^D
	\end{align*}
	and $B_p$ denotes the unit ball in the $\ell_p$ norm.
	
	Using these computations, we obtain the following result.
	\begin{theo}
		We have
		\begin{enumerate}
			\item the subdifferential of the nuclear norm for symmetric tensors
			is the set of tensors $\Yc$ satisfying
			\begin{enumerate}
				\item $\Yc$ has the same mode-$d$ singular spaces as $\Xc$ for all $d=1,\ldots,D$
				\item $\sigma_{\mathbb R^{nD}}(\Yc)_{d}=w^*_d v^*_d$ if $\sigma^{(d)}(\Xc)\neq 0$,
				\item $\sigma_{\mathbb R^{nD}}(\Yc)_{d}=0$ if $\sigma^{(d)}(\Xc)=0$ and if $\sigma^{(d')}(\Xc)\neq 0$ for some $d'$.
				\item $\sigma_{\mathbb R^{nD}}(\Yc)_{d} \in w_d^* B_{p^*}$, $w^* \in B_{q^*}$ if $\sigma^{(d')}(\Xc)=0$ for all $d'=1,\ldots,D$.
			\end{enumerate}
			
			\item the subdifferential of the nuclear norm for {\em odeco} tensors
			contains the closure of the convex hull of {\em odeco} tensors $\Yc$ satisfying
			\begin{enumerate}
				\item $\Yc$ has the same mode-$d$ singular spaces as $\Xc$ for all $d=1,\ldots,D$
				\item $\sigma_{\mathbb R^{nD}}(\Yc)_{d}=w^*_d v^*_d$ if $\sigma^{(d)}(\Xc)\neq 0$,
				\item $\sigma_{\mathbb R^{nD}}(\Yc)_{d}=0$ if $\sigma^{(d)}(\Xc)=0$ and if $\sigma^{(d')}(\Xc)\neq 0$ for some $d'$.
				\item $\sigma_{\mathbb R^{nD}}(\Yc)_{d} \in w_d^* B_{p^*}$, $w^* \in B_{q^*}$ if $\sigma^{(d')}(\Xc)=0$ for all $d'=1,\ldots,D$.
			\end{enumerate}
		\end{enumerate}
	\end{theo}
	
	\subsection{The nuclear norm}
	Consider $f(\cdot):\Rb^n\times\cdots\times\Rb^n \to \Rb$ defined by
	\bean
	f(s_1,\ldots,s_D) = \frac{1}{D}\sum_{i=1}^D\|s_i\|_1.
	\eean
	Then  for any $(s_1,\ldots,s_D)\in \Rb^n\times\cdots\times\Rb^n$, we have
	\bean
	\partial f(s_1,\ldots,s_D)=\frac{1}{D}\{(c_1,\ldots,c_D) \},
	\eean
	where $c_d = (c_{d1},\ldots,c_{dn})^t$ for $d=1,\ldots,D$ satisfies
	\bean
	c_{dj} = \left\{ \begin{array}{ll}1 & s_{dj}>0 \\ -1 & s_{dj} <0 \\ \omega_{dj}  & s_{dj}=0
	\end{array} \right.
	\eean
	with $\omega_{ij}$ being any real number in the interval $[-1 , 1]$.
	
	Thus, we obtain the following result.
	
	\begin{theo}
		We have that
		\begin{enumerate}
			\item the subdifferential of the nuclear norm for symmetric tensors
			is the set of tensors $\Yc$ satisfying
			\begin{enumerate}
				\item $\Yc$ has the same mode-$d$ singular spaces as $\Xc$ for all $d=1,\ldots,D$
				\item $\sigma_{\mathbb R^{nD}}(\Yc)_{dj}=1$ if $\sigma^{(d)}_j(\Xc)>0$,
				\item $\sigma_{\mathbb R^{nD}}(\Yc)_{dj}\in [0,1]$ if $\sigma^{(d)}_j(\Xc)=0$.
			\end{enumerate}
			
			\item the subdifferential of the nuclear norm for {\em odeco} tensors
			contains the closure of the convex hull of {\em odeco} tensors $\Yc$ satisfying
			\begin{enumerate}
				\item $\Yc$ has the same mode-$d$ singular spaces as $\Xc$ for all $d=1,\ldots,D$
				\item $\sigma_{\mathbb R^{nD}}(\Yc)_{dj}=1$ if $\sigma^{(d)}_j(\Xc)>0$,
				\item $\sigma_{\mathbb R^{nD}}(\Yc)_{dj}\in [0,1]$ if $\sigma^{(d)}_j(\Xc)=0$.
			\end{enumerate}
		\end{enumerate}
	\end{theo}
	
	\subsection{Remark on the remaining cases}
	We leave to the reader the easy task of deriving the general formulas for the cases $p=1$ and $q>1$, $p>1$ and $q=1$.
	
	\section{Conclusion and perspectives}
	
	In this paper, we studied the subdifferential of some tensor norms for symmetric tensors and {\em odeco} tensors. We provided a complete characterization of for the symmetric case and described a subset of the subdifferential for {\em odeco} tensors. The main tool in our analysis is an extension of the Von Neumann's trace inequality to the tensor setting recently proved in \cite{chretien2015neumann}.
	Such results may find applications in the field of Compressed Sensing. A lot of work remains in order to extend our results to non-symmetric settings. We plan to investigate this question in a future research project.
	
	
	
	

	\bibliographystyle{amsplain}
	\bibliography{TensorSubdiff}

\end{document}